\documentclass[11pt, reqno]{amsart}
\usepackage{amscd}
\usepackage{amsmath}
\usepackage{amssymb}
\usepackage{latexsym}
\usepackage{amsthm}
\usepackage{graphicx}
\usepackage{tikz}
\usepackage{tikz-cd}

\usepackage[all]{xy}       

    \SelectTips{cm}{10}     

    \everyxy={<2.5em,0em>:} 

    \xyoption{web}          


\usepackage[backref=page]{hyperref}

\setlength{\textwidth}{15.5cm} \setlength{\textheight}{20cm}
\setlength{\oddsidemargin}{0.0cm} \setlength{\evensidemargin}{0.0cm}
\setlength{\unitlength}{0.46em}

\definecolor{darkred}{HTML}{993333}

\newcommand{\arxiv}[1]{\href{http://arxiv.org/abs/#1}{\tt arXiv:\nolinkurl{#1}}}
\newcommand{\arXiv}[1]{\href{http://arxiv.org/abs/#1}{\tt arXiv:\nolinkurl{#1}}}

\newtheorem{theorem}{Theorem}[section]
\newtheorem{lemma}[theorem]{Lemma}

\newtheorem{corollary}[theorem]{Corollary}

\theoremstyle{remark}

\numberwithin{equation}{section}

\newcommand{\nc}{\newcommand}

\nc{\flags}{\mathcal{F}}

\nc{\KP}{\operatorname{KP}}

\def\ii{{\bf i}}
\nc{\re}{re}

\def\N{\mathbb{N}}

\def\C{\mathcal{C}}

\def\F{\mathcal{F}}

\def\I{\mathcal{I}}

\def\jj{{\bf j}}

\def\P{\mathcal{P}} 
\def\L{\mathcal{L}}


\nc{\co}{\overline{\nabla}}

\def\la{\lambda}

\def\g{\mathfrak{g}}
\def\uk{\underline{k}}

\def\G{\mathcal{G}}
\def\M{\mathcal{M}}

\def\caln{\mathcal{N}}

\def\Hom{\operatorname{Hom}}

\def\End{\operatorname{End}}

\def\Rep{\,\mbox{Rep}\,}

\def\im{\operatorname{im}}

\def\mods{\mbox{-mod}}

\def\dgmods{\mbox{-dgmod}}
\def\id{\mbox{id}}

\newcommand{\map}[2]{\,{:}\,#1\!\longrightarrow\!#2}



\def\T{\mathcal{T}}

\def\inv{^{-1}}



\def\homb{\Hom^\bullet}
\def\endb{\End^\bullet}

\numberwithin{equation}{section}
\def\X{X}

\title[Monoidality of Kato's Reflection Functors]{Monoidality of Kato's Reflection Functors}
\address{}\email{maths@petermc.net}
\author{Peter J McNamara}
\date{\today}

\begin{document}

\begin{abstract}
Kato has constructed reflection functors for KLR algebras which categorify the braid group action on a quantum group by algebra automorphisms.
We prove that these reflection functors are monoidal.
\end{abstract}
\maketitle

\section{Introduction}

Consider a quantised enveloping algebra $U_q(\g)$ where $\g$ is 
a simple Lie algebra of ADE type. It admits algebra automorphisms $T_i$ for each vertex $i$ of the Dynkin diagram. These automorphisms do not preserve the positive part $U_q(\g)^+$, instead there are explicitly given subalgebras $\ker(r_i)$ and $\ker(_ir)$ of $U_q(\g)^+$ which are mapped isomorphically onto each other via $T_i$. For a precise statement, see \cite[Proposition 38.1.6]{lusztigbook}.

The positive part of the quantum group $U_q(\g)^+$ was categorified in terms of KLR (Khovanov-Lauda-Rouquier) algebras in \cite{kl1}. The subalgebras $\ker(r_i)$ and $\ker(_ir)$ are categorified by the module categories of certain quotients $R(\nu)/\langle e_i\rangle$ and $R(\nu)/\langle _ie\rangle$ of the KLR algebras. These categories are Morita equivalent via functors we call Kato's reflection functors. These were discovered in \cite{kato}, and extended to positive characteristic KLR algebras in \cite{geometric}.

 In this paper, we show that Kato's reflection functors are monoidal. This answers a question from \cite{kkop}, allowing a proof of \cite[Conjecture 5.5]{kkop}. It also fixes an error in \cite[Lemma 4.2(2)]{kato} (in the published version).
Our approach is geometric and the key ingredient is the formality of KLR algebras, which we deduce from their purity. Thus we are necessarily restricted to KLR algebras in characteristic zero.

We make no attempt to discuss the situation beyond finite type. For this, the reader is encouraged to view \cite{kato2}.

This paper was produced independently of \cite{kato2}, which answers the same questions, and the author thanks Kato for providing him with access to a draft of his preprint.

\section{Definitions}

Let $I$ be the index set for a finite type ADE Dynkin diagram.
Fix $i\in I$. Let $Q$ be an orientation of the Dynkin diagram such that $i$ is a source. Let $Q'$ be the quiver obtained from $Q$ by reversing the directions of all arrows incident to $i$.

For $\la\in\N I$, 
let $X_\la$ be the moduli stack of representations of $Q$ of dimension vector $\la$. Let $F_\la$ be the moduli stack of representations of $Q$ of dimension vector $\la$ together with a full flag of subrepresentations. Let $\pi\map{F_\la}{X_\la}$ be the canonical projection. $\pi$ is proper. 

Let $k$ be a field of characteristic zero and define
\[
 \L_\la=\pi_!\uk_{F_\la}[\dim F_\la]
\]
The KLR algebra is defined by
\[
 R(\la)=\Hom^\bullet_{D^b(X_\la;k)}(\L_\la,\L_\la).
\]

Let $X'_\la$, $F_\la'$, $\L_\la'$ and $R(\la)'$ be the corresponding objects defined with $Q'$ in place of $Q$. By the main result of \cite{vv} and the discussion in \cite{kl2} there is an isomorphism $R(\la)\cong R(\la)'$ of graded associative algebras.

Normally in the literature, $R(\la)$ is considered as a graded associative algebra. The category $D^b(X_\la;k)$ has a differential graded enhancement, so we can consider $R(\la)$ as a differential graded algebra. Theorem \ref{formal} below shows that we lose no information by considering the associative algebra $R(\la)$ (which we sometimes consider as a differential graded algebra with trivial differential in order to talk about the category $R(\la)\dgmods$ of differential graded modules over $R(\la)$).

\begin{theorem}\cite[Proposition 10.6]{lusztig}\label{purity}
Suppose $\X_\la$ is defined over a finite field and we take the $l$-adic derived category. Then
 $\L_\la$ is pointwise pure.
\end{theorem}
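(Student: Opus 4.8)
The plan is to reduce the assertion, via proper base change, to the purity of the cohomology of the fibres of $\pi$, and then to establish the latter through an affine paving. Recall that $F_\la$ is a disjoint union $\bigsqcup_{\ii}F_\ii$ over the flag types $\ii=(i_1,\dots,i_N)$, the words in $I$ of content $\la$, where a point of $F_\ii$ is a representation $x$ of $Q$ together with a complete flag $0=V^0\subset V^1\subset\dots\subset V^N=V$ of subrepresentations with $V^k/V^{k-1}$ simple, supported at the vertex $i_k$. Each $F_\ii$ is smooth: writing $G_\la=\prod_iGL(V_i)$, one has $F_\ii=[\widetilde E_\ii/G_\la]$, where $\widetilde E_\ii$ is the incidence variety of pairs (representation, compatible flag), and projection to the partial flag variety of the underlying graded vector space realises $\widetilde E_\ii$ as a vector bundle (the flag-preserving representations) over a smooth base. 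Hence $\uk_{F_\ii}[\dim F_\ii]$ is pure. As $\pi$ is proper, $\pi_!=\pi_*$, and proper base change gives
\[
 \mathcal H^n(\L_\la)_x\;\cong\;\bigoplus_{\ii}H^{\,n+\dim F_\ii}\bigl(\pi_\ii^{-1}(x);\uk\bigr),
\]
where $\pi_\ii^{-1}(x)$ is the projective variety of type-$\ii$ flags of subrepresentations of the fixed representation $x$. Thus pointwise purity is equivalent to the statement that these quiver flag varieties have pure cohomology.

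To prove the latter I would exhibit an affine paving of $\pi_\ii^{-1}(x)$. The automorphism group $\mathrm{Aut}(x)\subset G_\la$ acts on this smooth projective fibre, and for a generic one-parameter subgroup of a maximal torus of $\mathrm{Aut}(x)$ the Bialynicki--Birula decomposition stratifies $\pi_\ii^{-1}(x)$ into attracting cells. Using the Gabriel decomposition $x=\bigoplus_\beta M_\beta^{\oplus m_\beta}$ into indecomposables, the torus acts on the multiplicity spaces, and one checks that for a generic subgroup the fixed flags are isolated, so that the cells are affine spaces. An affine paving forces the cohomology of $\pi_\ii^{-1}(x)$ to be of Tate type and concentrated in even degrees, hence pure of the expected weight; pointwise purity of $\L_\la$ follows.

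The crux, and the step I expect to be the main obstacle, is precisely this geometric input: that the fibres $\pi_\ii^{-1}(x)$ admit affine pavings, uniformly in $x$ and across all of ADE type. A more robust alternative avoids the individual fibres. Since $\pi$ is proper with smooth source, Deligne's theorem (Weil II) shows directly that $\L_\la$ is pure in the sense of weights, and the decomposition theorem then writes $\L_\la$ as a direct sum of shifts of the intersection complexes $\mathrm{IC}(\overline{O_\m};\uk)$ of the $G_\la$-orbit closures in $\X_\la$; the coefficients are constant because $\mathrm{Aut}(x)$, being the unit group of a finite-dimensional algebra, is connected, so the orbits carry no nontrivial equivariant local systems. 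Pointwise purity of $\L_\la$ then reduces to pointwise purity of these orbit-closure IC sheaves, which one establishes by resolving $\overline{O_\m}$ and paving the fibres of the resolution by affines, or in type $A$ by reduction to the classical parity of Schubert and nilpotent-orbit IC sheaves. Either route ultimately rests on producing affine pavings; granting these, the purity statement is formal.
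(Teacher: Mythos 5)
The paper does not actually prove this statement: it is quoted with a citation to Lusztig's Proposition 10.6, so the only fair comparison is between your argument and the (genuinely nontrivial) theorem being invoked. Your formal reductions are correct and standard: proper base change does identify the stalks of $\L_\la$ with the cohomology of the quiver flag varieties $\pi^{-1}(x)$, and your alternative route (properness plus smoothness of $F_\la$ gives purity of $\L_\la$ as a mixed complex by Weil II and duality; the decomposition theorem, finiteness of the number of orbits in finite type, and connectedness of $\mathrm{Aut}(x)$ then reduce everything to pointwise purity of the orbit-closure IC sheaves) is a correct reduction. But both routes funnel into the same geometric claim --- affine pavings of the fibres, or pointwise purity of the IC stalks --- which you explicitly defer, and the one concrete argument you offer for it fails.

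The Bialynicki--Birula argument breaks in two ways. First, the fibres $\pi^{-1}(x)$ are not smooth: only the total space $F_\la$ is smooth, and quiver flag varieties and quiver Grassmannians of Dynkin representations are singular in general, so the BB decomposition does not produce affine cells (or any useful stratification) as stated. Second, and decisively, the fixed locus of a maximal torus of $\mathrm{Aut}(x)$ is not isolated, no matter how generic the one-parameter subgroup: over a Dynkin quiver every indecomposable is a brick ($\mathrm{End}(M_\beta)=k$), so the maximal torus of $\mathrm{Aut}(x)$ is exactly $\prod_\beta \mathbb{G}_m^{m_\beta}$ acting by scalars on the individual indecomposable summands, and scalars act trivially on flags of subrepresentations; the fixed locus is therefore a union of products of flag varieties of the indecomposables $M_\beta$, which are positive-dimensional in general. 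Concretely, for the subspace-oriented $D_4$ quiver and $x$ the indecomposable of dimension vector $(1,1,1;2)$, the subrepresentations of dimension $(0,0,0;1)$ form a $\mathbb{P}^1$ on which $\mathrm{Aut}(x)=\mathbb{G}_m$ acts trivially, so for indecomposable $x$ the torus sees nothing at all and no induction is gained. Finally, the deferred input cannot be recovered formally: Gabber/BBD purity only bounds the weights of stalks and costalks, and pointwise purity is strictly stronger (it holds at conical singularities but fails for general varieties, e.g.\ when a punctured neighbourhood has impure cohomology). The paving statements you would need are real theorems proved much later and by quite different methods (Reineke's desingularizations of Dynkin orbit closures; Cerulli Irelli--Esposito--Franzen--Reineke for cell decompositions of Dynkin quiver Grassmannians), while Lusztig's own proof rests on the special geometry of Dynkin orbit closures rather than on a generic torus argument. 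So the proposal, as it stands, reproduces the formal shell of the result but not its mathematical core.
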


The following key result is also \cite[Lemma 4.8]{webster}.

\begin{theorem}\label{formal}
 $R(\la)$ is formal.
\end{theorem}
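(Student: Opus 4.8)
The plan is to deduce formality from the pointwise purity of $\L_\la$ (Theorem \ref{purity}) by a weight argument, and then transport the conclusion from the $\ell$-adic setting back to our characteristic-zero coefficient field $\k$. First I would replace the characteristic-zero geometry by an $\ell$-adic model: the stacks $\X_\la$ and $F_\la$ are of a purely combinatorial, quiver-theoretic nature and spread out over a finitely generated subring of $\k$, so after specialising to a suitable finite field $\mathbb{F}_q$ and passing to $\overline{\mathbb{F}_q}$ with $\overline{\mathbb{Q}}_\ell$-coefficients I obtain a Frobenius-equivariant dg-algebra model $\operatorname{RHom}^\bullet(\L_\la,\L_\la)$ for $R(\la)$ in the dg enhancement of the derived category. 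The advantage of working here is that the geometric Frobenius endows this dg-algebra with a weight grading for which the differential strictly preserves weights and the multiplication adds them.

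The key consequence of purity that I would extract is that the cohomology $\Ext^n(\L_\la,\L_\la)$ is pure of weight $n$ for every $n$, i.e.\ the Frobenius eigenvalues on $\Ext^n$ have absolute value $q^{n/2}$. Granting Theorem \ref{purity}, over $\overline{\mathbb{F}_q}$ the complex $\L_\la$ is geometrically semisimple, hence a direct sum of shifts of simple perverse sheaves, and the computation of $\Ext^\bullet(\L_\la,\L_\la)$ reduces to $\Ext$-groups between intersection cohomology sheaves of the orbit closures in the representation space. Because $Q$ is of ADE type, Gabriel's theorem guarantees finitely many orbits, and the relevant orbit closures are equivariantly of mixed Tate type with cohomology concentrated in weight equal to cohomological degree (the base $\X_\la$ being a quotient of an affine space by a product of general linear groups). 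It is exactly this ``weight equals degree'' matching on $\Ext^\bullet$ that I would establish and exploit.

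With this in hand, formality follows from a minimal-model computation. I would choose a minimal $A_\infty$-model $(\Ext^\bullet(\L_\la,\L_\la),\{m_k\}_{k\ge 2})$ of the dg-algebra $\operatorname{RHom}^\bullet(\L_\la,\L_\la)$, where $m_k$ has cohomological degree $2-k$; by homotopy transfer this model can be taken Frobenius-equivariant, so that each $m_k$ preserves weights. On a class in $\Ext^{n_1}\otimes\cdots\otimes\Ext^{n_k}$ the weight is $n_1+\cdots+n_k$, while the target $\Ext^{n_1+\cdots+n_k+2-k}$ sits in weight $n_1+\cdots+n_k+2-k$ by the purity established above; weight-preservation then forces $m_k=0$ for all $k\ge 3$. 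Thus only the associative product $m_2$ survives, which is precisely the assertion that $\operatorname{RHom}^\bullet(\L_\la,\L_\la)$ is formal. Finally I would transfer this to $\k$: since the varieties in play are defined over a number ring and the Betti and $\ell$-adic realisations of the constant-sheaf pushforwards correspond under the comparison isomorphisms, the $A_\infty$-operations computed over $\k$ agree with those computed over $\overline{\mathbb{Q}}_\ell$ and hence also vanish, so $R(\la)$ is formal over $\k$.

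I expect the main obstacle to be the second step: deducing that $\Ext^n(\L_\la,\L_\la)$ is pure of weight exactly $n$ from pointwise purity of $\L_\la$. The subtlety is that $\X_\la$ is neither proper nor a variety but a non-proper quotient stack, so the naive weight estimates for the global sections functor give only one of the two inequalities needed for purity; resolving this requires genuinely using the geometric semisimplicity of $\L_\la$ together with the mixed-Tate, ``even'' structure of the finitely many orbit closures, rather than Deligne's bounds alone. A secondary technical point is to ensure that the minimal $A_\infty$-model can be chosen Frobenius-equivariantly and that the comparison with the characteristic-zero construction is compatible with the $A_\infty$-structures.
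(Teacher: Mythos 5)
Your proposal is correct and follows essentially the same route as the paper: spread out to a finite field, use Lusztig's pointwise purity (Theorem \ref{purity}) to get purity of the cohomology of $R(\la)$, and then kill all higher operations by a weight argument. The only difference is packaging: the steps you carry out by hand --- producing a Frobenius-equivariant dg/$A_\infty$ model and showing that weight considerations force the higher operations to vanish --- are exactly what the paper outsources to \cite[Theorem A.1.1]{pb} and \cite[Proposition 4]{schnurer}.
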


\begin{proof}
 By Theorem \ref{purity}, the cohomology of $R(\la)$ is pure of weight zero, when $\L_\la$ is spread out to a finite field and the $l$-adic derived category is considered. Then \cite[Theorem A.1.1]{pb} shows that this Frobenius action can be lifted to the dg-algebra representing $R(\la)$, and \cite[Proposition 4]{schnurer} implies that $R(\la)$ is formal.
\end{proof}

\begin{theorem}\cite{kato,klr1}
 $R(\la)$ has finite global dimension.
\end{theorem}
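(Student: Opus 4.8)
The plan is to reduce the statement to the assertion that every simple graded $R(\la)$-module has finite projective dimension, and to establish this by transporting the problem into the geometry of $X_\la$ via formality. First I would record the consequences of Theorem \ref{purity}: since $\L_\la$ is pointwise pure, the decomposition theorem gives $\L_\la\cong\bigoplus_O \mathrm{IC}(O)\otimes M_O$, where $O$ ranges over the finitely many $G_\la$-orbits in the representation space $E_\la$ (equivalently over the Kostant partitions of $\la$) and the $M_O$ are graded multiplicity spaces; in finite type every orbit occurs as a summand, so the indecomposable summands of $\L_\la$ are exactly the $\mathrm{IC}(O)$. Consequently the simple modules $L_O$ and the indecomposable projectives $P_O$ of $R(\la)=\Hom^\bullet_{D^b(X_\la;k)}(\L_\la,\L_\la)$ are indexed by these orbits. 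By Theorem \ref{formal} the dg-algebra computing $R(\la)$ is formal, so $\Hom^\bullet_{D^b(X_\la;k)}(\L_\la,-)$ restricts to an equivalence $\langle\L_\la\rangle\isomto\mathrm{perf}(R(\la))$ from the thick subcategory of $D^b(X_\la;k)$ generated by $\L_\la$ onto the perfect complexes, sending $\mathrm{IC}(O)\mapsto P_O$. Under this dictionary ``finite projective dimension'' becomes ``lies in the essential image'', i.e. ``is perfect''.

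Next I would introduce geometric standard objects. Stratify $X_\la$ by its finitely many orbits, ordered by the closure order, and for each orbit put $\Delta_O=(j_O)_!\,\uk_O[\dim O]$, the shriek extension of the shifted constant sheaf along the locally closed inclusion $j_O\map{O}{X_\la}$. The simple perverse sheaves $\mathrm{IC}(O)$ generate the $\mathcal S$-constructible derived category $D^b_{\mathcal S}(X_\la;k)$ as a triangulated category, and since all of them appear in $\L_\la$ we obtain $\langle\L_\la\rangle=D^b_{\mathcal S}(X_\la;k)$; in particular each $\Delta_O$, being $\mathcal S$-constructible, is built from finitely many $\mathrm{IC}(O')$ by finitely many triangles and hence lies in $\langle\L_\la\rangle$. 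Therefore its image $\Delta(O):=\Hom^\bullet_{D^b(X_\la;k)}(\L_\la,\Delta_O)$ is a perfect $R(\la)$-module, i.e. has finite projective dimension. The conceptual source of this finiteness is regularity: each orbit is $O\cong[\mathrm{pt}/\Aut(O)]$ with $\Aut(O)$ having reductive quotient a product of general linear groups, so the endomorphism ring of $\Delta(O)$ is the polynomial ring $H^\bullet(B\Aut(O))$, over which $\Delta(O)$ is free of finite rank; this ring is regular of finite global dimension, which is what ultimately bounds the projective dimension.

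Finally I would assemble these pieces into the affine highest weight (affine quasi-hereditary) structure on $R(\la)$ in the sense of Kato: the standards $\Delta(O)$ satisfy the Ext-orthogonality $\Ext^{\ge 1}_{R(\la)}(\Delta(O),\nabla(O'))=0$ against the dual costandards $\nabla(O')$, the projectives $P_O$ admit finite filtrations by standards $\Delta(O')$ with $O'\ge O$, and the poset of orbits is finite. Standard homological algebra over a finite poset then upgrades the finite projective dimension of each $\Delta(O)$ to a finite projective dimension of each simple $L_O$ (by downward induction on the poset) and bounds $\mathrm{gldim}\,R(\la)$ in terms of the length of the poset of orbits together with $\max_O\mathrm{proj.dim}\,\Delta(O)$.

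The main obstacle is this last step: rigorously erecting the affine quasi-hereditary formalism for the infinite-dimensional graded modules at hand, that is, establishing the standard filtrations of the projectives and the standard-costandard Ext-orthogonality, rather than merely producing the standard objects. The genuine subtlety, and the reason no cheap degree bound can work, is that the geometric self-extension algebra $\Ext^\bullet(\L_\la,\L_\la)=R(\la)$ is itself unbounded, since on the stack $X_\la$ the equivariant cohomology $H^\bullet(B\Aut(O))$ lives in arbitrarily high degree; finiteness of the global dimension is therefore a purely module-theoretic phenomenon, extracted from the regularity of those polynomial cohomology rings through the finite stratification, and not from any boundedness of $\Hom^\bullet_{D^b(X_\la;k)}(\L_\la,\L_\la)$.
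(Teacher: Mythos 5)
Your strategy is essentially the geometric one of the sources the paper cites for this theorem (the paper itself gives no proof: it imports the result from \cite{kato} and \cite{klr1}), and several of your individual steps are sound: pointwise purity plus the decomposition theorem does identify the indecomposable summands of $\L_\la$ with the $\mathrm{IC}(O)$ for all orbits $O$ in finite ADE type; formality does give an equivalence of $\langle\L_\la\rangle$ with the perfect dg $R(\la)$-modules; and since the $\mathrm{IC}(O)$ generate the orbit-constructible derived category, the standard modules $\Delta(O)=\Hom^\bullet(\L_\la,\Delta_O)$ are indeed perfect, i.e.\ of finite projective dimension. The genuine gap is the final step, which you flag as an obstacle rather than overcome: finite global dimension is equivalent to finiteness of the projective dimension of the \emph{simple} modules $L_O$, and this does not follow by ``standard homological algebra over a finite poset'' from perfectness of the standards. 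The kernel of $\Delta(O)\twoheadrightarrow L_O$ is infinite-dimensional, not of finite length, and not obviously built from standards, so the naive downward induction on the orbit poset fails exactly where the classical quasi-hereditary argument would invoke finite-dimensionality.

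What is actually needed is the affine heredity-chain machinery: one must prove that the idempotent ideal $R(\la)eR(\la)$ attached to a maximal orbit is projective as a left module (equivalently, that each $P_O$ carries a finite filtration by standards $\Delta(O')$ with $O'\geq O$), that $\End(\Delta(O))$ is the polynomial ring $H^\bullet(B\Aut(O))$ with $\Delta(O)$ free of finite rank over it (this is where pointwise purity must be invoked a second time, to split $j_O^!\L_\la$ into shifted constant sheaves), and the orthogonality $\Ext^{\geq 1}(\Delta(O),\nabla(O'))=0$; one then gets a recursion of the shape $\mathrm{gldim}\,R\leq \mathrm{gldim}\,(R/ReR)+\mathrm{gldim}\,(eRe)+2$ and concludes by induction on the poset. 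Establishing these axioms is precisely the content of the cited proofs---Kato does it geometrically via recollement for $\L_\la$ and purity, McNamara algebraically via cuspidal modules and PBW filtrations---whereas your proposal asserts them. So what you have is an accurate road map of the known argument with its load-bearing step missing.
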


\begin{corollary}\label{triequiv}
 There is an equivalence of triangulated categories
 \[
   \langle \L_\la \rangle \cong R(\la)\dgmods
 \]
given by $\Hom^\bullet(\L_\la,-)$, where $\langle L_\la \rangle$ is the full triangulated subcategory of $D^b(X_\la)$ generated by $\L_\la$.
\end{corollary}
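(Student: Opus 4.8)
The plan is to deduce this from the formality theorem just established, combined with the finite global dimension of $R(\la)$, using the standard machinery relating a compact generator of a triangulated category to the derived category of its (differential graded) endomorphism algebra. The starting point is Keller's theorem (or the general theory of tilting objects / dg-enhancements): if $\T$ is a triangulated category with a dg-enhancement and $E$ is a compact object generating $\T$, then $\homb(E,-)$ induces an equivalence between the thick subcategory $\langle E\rangle$ and the derived category of the dg-algebra $\endb(E)$. Here I would take $E=\L_\la$, so that $\endb(\L_\la)=R(\la)$ as a dg-algebra, and $\langle\L_\la\rangle$ is exactly the thick subcategory generated by $\L_\la$ inside $D^b(X_\la;k)$.

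First I would invoke the dg-enhancement of $D^b(X_\la;k)$ mentioned in the text, so that $\homb(\L_\la,\L_\la)$ is genuinely a dg-algebra and $\homb(\L_\la,-)$ is a genuine dg-functor, allowing me to apply Keller's recognition theorem for derived categories. This produces an equivalence
\[
  \langle\L_\la\rangle \cong R(\la)_{\mathrm{dg}}\dgmods,
\]
where $R(\la)_{\mathrm{dg}}$ denotes the dg-algebra $\endb(\L_\la)$. Next I would apply Theorem \ref{formal}: since $R(\la)$ is formal, the dg-algebra $R(\la)_{\mathrm{dg}}$ is quasi-isomorphic to its cohomology algebra $R(\la)$ equipped with zero differential. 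Because quasi-isomorphic dg-algebras have equivalent derived categories of dg-modules, this replaces $R(\la)_{\mathrm{dg}}\dgmods$ by $R(\la)\dgmods$, where now $R(\la)$ is viewed as a dg-algebra with trivial differential exactly as set up in the preceding discussion.

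I anticipate the main subtlety to be bookkeeping around the correct notion of "generated" and "compact": one must ensure that $\L_\la$ is a compact object of the ambient category and that $\langle\L_\la\rangle$ is taken to be the thick (idempotent-complete) triangulated subcategory it generates, so that Keller's equivalence lands in the full derived category of dg-modules rather than some perfect-complex subcategory. Here the finite global dimension of $R(\la)$ plays its role: it guarantees that every dg-module is quasi-isomorphic to a perfect one, so that $R(\la)\dgmods$ (or its compact part) is the right target and the equivalence is onto, rather than onto a proper subcategory. I would therefore state the finite global dimension input explicitly at the point where I identify the essential image, rather than treating it as automatic. Apart from this, the argument is a direct concatenation of standard results, with formality doing the essential work of collapsing the dg-structure to the ordinary associative algebra $R(\la)$.
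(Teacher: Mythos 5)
Your proposal is correct and takes essentially the same approach as the paper: formality collapses the dg-endomorphism algebra to $R(\la)$ with trivial differential, a devissage/Keller-type argument identifies $\langle \L_\la \rangle$ with the subcategory of $R(\la)\dgmods$ generated by the free module, and finite global dimension shows this subcategory is all of $R(\la)\dgmods$. The only cosmetic difference is ordering — you apply the Keller equivalence to the genuine dg-endomorphism algebra first and then transfer along the quasi-isomorphism given by formality, whereas the paper invokes formality first and then runs the devissage by hand.
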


\begin{proof}
 Since $R(\la)$ is formal, the image of $\Hom^\bullet(\L_\la,-)$ lands in $R(\la)\dgmods$, where $R(\la)$ has trivial differential. A standard devissage argument shows that $\Hom^\bullet(\L_\la,-)$ induces an equivalence between $\langle \L_\la \rangle$ and the full subcategory of $R(\la)\dgmods$ generated by $R(\la)$. Since $R(\la)$ has finite global dimension, this latter category is all of $R(\la)\dgmods$.
\end{proof}

Let $S_i$ be the simple representation of $Q$ at the vertex $i$. Let $U_\la\subset X_\la$ be the substack of representations $M$ of $Q$ with $\Hom(S_i,M)=0$. Write $j\map{U_\la}{X_\la}$ for the inclusion. Then $j$ is an open immersion.

When considering the quiver $Q'$, we instead define $U_\la'\subset X_\la'$ to be the substack of representations $M'$ of $Q'$ such that $\Hom(M',S_i')=0$. 

The algebra $R(\nu)$ has distinguished idempotents $e_i$ and $_ie$ for each $i\in I$, used to define the relevant categories for Kato reflection functors as in \cite{geometric}.
The category $\C_i(\nu)$ is defined to be the full subcategory of $R(\nu)\mods$ consisting of objects $M$ such that $e_iM=0$. It is thus equivalent to modules over the quotient $R(\nu)/\langle e_i \rangle$.
The category $_i\C(\nu)$ is similarly defined using the idempotent $_ie$. Kato's reflection functors give an equivalence $\C_i(\nu)\cong {_i\C}(s_i\nu)$, where $s_i$ is the simple reflection associated to $i$.

There is an isomorphism \cite{geometric}
\[
 R(\la)/\langle e_i \rangle \cong \homb (j^*\L_\la,j^*\L_\la)
\]

The algebra $\homb (j^*\L_\la,j^*\L_\la)$ is also formal and of finite global dimension. The formality follows from the same purity argument as for $R(\la)$, while the finitude of global dimension is in \cite{kato} and \cite{geometric}.

We can then upgrade Corollary \ref{triequiv} to obtain an equivalence of triangulated categories
\begin{equation}\label{opencorollary}
 \langle j^* \L_\la \rangle \cong R(\la)/\langle e_s \rangle \dgmods
\end{equation}
compatible with the equivalence of the corollary via $j_*$ and the inclusion.

\section{Comparison of algebraic and geometric induction}

Let $S_{\la\mu}$ be the moduli stack of short exact sequences of representations of $Q$
\begin{equation}\label{ses}
 0\to M'\to M\to M'' \to 0
\end{equation}
where $\dim M'=\la$ and $\dim M''=\mu$.

Let $p\map{S_{\la\mu}}{\X_{\la+\mu}}$ be the map sending the short exact sequence (\ref{ses}) to $M$. Let $q\map{S_{\la\mu}}{\X_\la\times \X_\mu}$ be the map sending (\ref{ses}) to $(M',M'')$. The map $p$ is proper and $q$ is smooth.

The geometric induction functor $\I_G\map{D(X_\la)\times D(X_\mu)}{D(X_{\la+\mu})}$ is
\[
 I_G(\F,\G)=p_!q^*(F\boxtimes \G)[\dim q]
\]

The stack $F_\la$ is the disjoint union of $F_\la^\ii$, where $\ii$ runs over all sequences $\ii=(i_1,\ldots,i_n)$ with each $i_j\in I$ and $\sum_j i_j = \la$. The sequence $\ii$ records the sequence of simple subquotients in the full flag. Let $\P_\ii=\pi_! \uk_{F_\la^\ii} [\dim F_\la^\ii]$. Then 
\(
 \L_{\la}=\oplus_{\ii} \P_\ii.
\)

It is not difficult to check that
\[
 I_G(\P_\ii,\P_\jj)=P_{\ii\jj}
\]
where $\ii\jj$ is the concatenation of the two sequences.

Therefore $I_G(\L_\la,\L_\mu)$ is a direct summand of $\L_{\la+\mu}$. Let $e_{\la\mu}\in R(\la+\mu)$ be the projection to this direct summand.

The algebraic induction functor $I_A\map{R(\la)\mods\times R(\mu)\mods}{R(\la\mu)\mods}$ is
\[
 I_A(M,N)=R(\la+\mu)e_{\la\mu}\bigotimes_{R(\la)\otimes R(\mu)} M\boxtimes N.
\]

\begin{theorem}\label{induction}
 For $\F\in \langle \L_\la\rangle$ and $\G\in \langle \L_\mu \rangle $, there is a natural isomorphism of $R(\la+\mu)$-dg-modules.
 \[
  I_A(\homb(\L_\la\boxtimes \L_\mu,\F\boxtimes \G))\cong \homb (\L_{\la+\mu},I_G(\F \boxtimes \G)).
 \]
\end{theorem}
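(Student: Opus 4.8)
The strategy is to exhibit both sides as triangulated functors of the pair $(\F,\G)$, to construct a single natural transformation between them, and then to verify it is an isomorphism on the generating pair $(\L_\la,\L_\mu)$; a d\'evissage then upgrades this to all of $\langle\L_\la\rangle\times\langle\L_\mu\rangle$. Throughout I would work in the dg-enhancement of $D^b$, so that $\homb$ denotes the dg Hom-complex and $I_G=p_!q^*[\dim q]$ is a dg-functor. The first ingredient is the external-product (K\"unneth) identification
\[
 \homb(\L_\la\boxtimes\L_\mu,\F\boxtimes\G)\cong\homb(\L_\la,\F)\otimes_k\homb(\L_\mu,\G)
\]
of $R(\la)\otimes R(\mu)$-dg-modules, which is what lets the left-hand side be read as $I_A(M\boxtimes N)$ with $M=\homb(\L_\la,\F)$ and $N=\homb(\L_\mu,\G)$. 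The second is the identification $R(\la+\mu)e_{\la\mu}\cong\homb(\L_{\la+\mu},I_G(\L_\la,\L_\mu))$, coming from the fact already recorded that $I_G(\L_\la,\L_\mu)$ is the summand of $\L_{\la+\mu}$ cut out by $e_{\la\mu}$; here the right $R(\la)\otimes R(\mu)$-action on $R(\la+\mu)e_{\la\mu}$ is exactly the one induced by functoriality of $I_G$ through the map $R(\la)\otimes R(\mu)=\End^\bullet(\L_\la\boxtimes\L_\mu)\to\End^\bullet(I_G(\L_\la,\L_\mu))=e_{\la\mu}R(\la+\mu)e_{\la\mu}$.

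Next I would build the natural transformation by composition. Given $\phi\in R(\la+\mu)e_{\la\mu}=\homb(\L_{\la+\mu},I_G(\L_\la,\L_\mu))$ and $m\boxtimes n\in\homb(\L_\la\boxtimes\L_\mu,\F\boxtimes\G)$, apply the dg-functor $I_G$ to the morphism $m\boxtimes n$ and compose:
\[
 \phi\otimes(m\boxtimes n)\longmapsto I_G(m\boxtimes n)\circ\phi\in\homb(\L_{\la+\mu},I_G(\F,\G)).
\]
This is a chain map, it is left $R(\la+\mu)$-linear (the $R(\la+\mu)$-action is precomposition on $\L_{\la+\mu}$, which commutes with postcomposition by $I_G(m\boxtimes n)$), and it descends to the tensor product over $R(\la)\otimes R(\mu)$ because for $r\in R(\la)\otimes R(\mu)$ both $(\phi\cdot r)\otimes(m\boxtimes n)$ and $\phi\otimes(r\cdot(m\boxtimes n))$ produce $I_G(m\boxtimes n)\circ I_G(r)\circ\phi$, using functoriality of $I_G$ and the matching of the two module structures recorded above. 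Naturality in $\F$ and $\G$ is immediate from the functoriality of $I_G$ and of $\homb(\L_{\la+\mu},-)$.

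It then remains to see this map is a quasi-isomorphism. Both functors are triangulated in each of $\F$ and $\G$: the right-hand side is the composite of the triangulated functors $I_G$ and $\homb(\L_{\la+\mu},-)$, while the left-hand side is triangulated because $R(\la+\mu)e_{\la\mu}$ is free as a right $R(\la)\otimes R(\mu)$-module (the standard Mackey/shuffle basis), so the tensor product computing $I_A$ requires no derived correction. Consequently the full subcategory of pairs on which the natural transformation is an isomorphism is closed under shifts, cones and direct summands, and it suffices to treat $\F=\L_\la$, $\G=\L_\mu$. In that case K\"unneth gives $\homb(\L_\la\boxtimes\L_\mu,\L_\la\boxtimes\L_\mu)=R(\la)\otimes R(\mu)$ as the free rank-one module, so the left-hand side is $R(\la+\mu)e_{\la\mu}\otimes_{R(\la)\otimes R(\mu)}(R(\la)\otimes R(\mu))=R(\la+\mu)e_{\la\mu}$, the right-hand side is $\homb(\L_{\la+\mu},I_G(\L_\la,\L_\mu))=R(\la+\mu)e_{\la\mu}$, and the natural map is the canonical identification. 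Since $\L_\la$ and $\L_\mu$ generate, this completes the argument.

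The main obstacle I anticipate is not the d\'evissage but the dg-bookkeeping: one must genuinely lift $I_G$ to a dg-functor on the dg-enhancements so that applying it to elements of Hom-complexes yields chain maps, and one must check that the composition map is a quasi-isomorphism of dg-modules rather than merely an isomorphism on cohomology. Here formality (Theorem \ref{formal}) and finite global dimension, which let us pass freely between $\langle\L_\la\rangle$ and $R(\la)\dgmods$ via Corollary \ref{triequiv}, are what make the reduction to the generating pair legitimate. The remaining care is in matching the left/right module conventions so that the two $R(\la)\otimes R(\mu)$-actions entering the tensor product genuinely coincide.
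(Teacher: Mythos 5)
Your proposal is correct and follows essentially the same route as the paper: the same natural transformation (sending $\phi\otimes(m\boxtimes n)$ to $I_G(m\boxtimes n)\circ\phi$, which is exactly the paper's map $(xe_{\la\mu},y)\mapsto xe_{\la\mu}\,p_!q^*(y)$), verified to be an isomorphism on the generating pair $(\L_\la,\L_\mu)$, and then extended by d\'evissage. Your supplementary justifications --- the K\"unneth identification of the left-hand side and the freeness of $R(\la+\mu)e_{\la\mu}$ as a right $R(\la)\otimes R(\mu)$-module, which ensures the underived tensor product is triangulated so the d\'evissage is legitimate --- are details the paper leaves implicit in its ``standard devissage argument.''
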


\begin{proof}
 Consider the two functors
 \begin{align*}
  F&= I_A(\homb(\L_\la\boxtimes \L_\mu,-\boxtimes -)) \\
  G&=\homb (\L_{\la+\mu},I_G(- \boxtimes -)).
 \end{align*}
We begin by constructing a natural transformation $\pi\map{F}{G}$. To construct it, it suffices to find a natural bilinear map
\[
 R(\la+\mu)e_{\la\mu} \times \homb (\L_\la\boxtimes \L_\mu,\F) \to \homb (\L_{\la+\mu}, p_!q^* \F).
\]
This map is
\[
 (xe_{\la\mu},y)\mapsto xe_{\la\mu} p_!q^*(y),
\]
noting that $R(\la+\mu)=\End^\bullet(\L_{\la+\mu})$ and $e_{\la\mu}$ is the projection from $\L_{\la+\mu}$ to $p_!q^*(\L_\la\boxtimes \L_\mu)$.

Now note that $\pi$ is an isomorphism when $\F=\L_\la$ and $\G=\L_\mu$. Then by a standard devissage argument, $\pi$ is an isomorphism whenever $\F$ and $\G$ are in the triangulated categories generated by $\L_\la$ and $\L_\mu$ respectively, as required.
\end{proof}

\section{The reflection functor}

\begin{lemma}\label{lem:jstar}
 Suppose $M\in\C_s$. Then $M\cong \homb(\L,j_*\M)$ for some $\M\in D^b(U_\la;k)$.
\end{lemma}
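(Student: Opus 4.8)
The plan is to read off the statement directly from the triangulated equivalence \eqref{opencorollary} together with the adjunction $j^{*}\dashv j_{*}$ attached to the open immersion $j$. Since $M\in\C_s$, the idempotent $e_s$ annihilates $M$, so $M$ is naturally a module over the quotient $R(\la)/\langle e_s\rangle$. Viewing $M$ as a differential graded module concentrated in degree zero with trivial differential, it becomes an object of $R(\la)/\langle e_s\rangle\dgmods$; here I would use, exactly as in the proof of Corollary \ref{triequiv}, that $R(\la)/\langle e_s\rangle$ has finite global dimension, so that every such module lies in the triangulated subcategory generated by the algebra, which is all of $R(\la)/\langle e_s\rangle\dgmods$. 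The equivalence \eqref{opencorollary}, induced by $\homb(j^{*}\L,-)$, then produces an object $\M\in\langle j^{*}\L\rangle\subseteq D^{b}(U_\la;k)$ together with an isomorphism $M\cong\homb(j^{*}\L,\M)$.

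Next I would invoke the adjunction $j^{*}\dashv j_{*}$, giving a natural isomorphism $\homb(j^{*}\L,\M)\cong\homb(\L,j_{*}\M)$. The point that needs care is that this is an isomorphism of $R(\la)$-modules and not merely of complexes of $k$-vector spaces: the left-hand side carries the action of $\endb(j^{*}\L)=R(\la)/\langle e_s\rangle$ by precomposition, the right-hand side carries the action of $R(\la)=\endb(\L)$ by precomposition, and the restriction map $\endb(\L)\to\endb(j^{*}\L)$, $\phi\mapsto j^{*}\phi$, intertwines the two through the adjunction. This is precisely the compatibility of \eqref{opencorollary} with Corollary \ref{triequiv} via $j_{*}$ and the inclusion, which I would cite. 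Composing the two isomorphisms then yields $M\cong\homb(\L,j_{*}\M)$ with $\M\in D^{b}(U_\la;k)$, as desired; as a consistency check, the right-hand side is automatically annihilated by $e_s$, since its $R(\la)$-action factors through $R(\la)/\langle e_s\rangle$, so it does land in $\C_s$.

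I expect the only real obstacle to be the bookkeeping in the adjunction step, namely verifying $R(\la)$-linearity of $\homb(j^{*}\L,\M)\cong\homb(\L,j_{*}\M)$ rather than the existence of the underlying isomorphism of complexes. Once the compatibility recorded after \eqref{opencorollary} is in hand, everything else is formal, and no further geometric input is required.
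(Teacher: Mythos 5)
Your proposal is correct and follows essentially the same route as the paper: the paper's proof also observes that $M$ is a module over $\homb(j^*\L,j^*\L)$, applies the equivalence \eqref{opencorollary} to write $M\cong\homb(j^*\L,\M)$, and then uses the adjunction $j^*\dashv j_*$ to conclude. Your additional verifications --- that finite global dimension puts $M$ in the image of the equivalence, and that the adjunction isomorphism is $R(\la)$-linear via the restriction map $\endb(\L)\to\endb(j^*\L)$ --- are exactly the details the paper leaves implicit in its two-sentence proof.
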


\begin{proof}
 $M$ is a module over $\homb(j^*\L,j^*\L)$, hence by (\ref{opencorollary}) is of the form $\homb(j^*\L,\M)$ for some $\M$. Since $j_*$ is right adjoint to $j^*$, we get the desired result.
\end{proof}

Let $V_{\la\mu}$ be the moduli stack of short exact sequences of representations of $Q$
\[
 0\to M'\to M\to M''\to 0
\]
where $\dim M'=\la$, $\dim M''=\mu$ and $\Hom(S_i,M')=\Hom(S_i,M'')=0$. Let $V'_{\la\mu}$ be the corresponding moduli stack for the quiver $Q'$.

In \cite{bgp}, reflection functors between the categories $\Rep(Q')$ and $\Rep(Q)$ are constructed which are shown to have the following property:

\begin{theorem}\label{bgg}
 The BGP reflection functor from $\Rep(Q')$ to $\Rep(Q)$ induces isomorphisms $x_{\la}$ and $\tilde x_{\la\mu}$ of stacks such that the following diagram commutes:
 \[
  \begin{CD}
   U_{\la+\mu} @<<< V_{\la\mu} @>>> U_\la\times U_\mu \\
   @Ax_{\la+\mu}AA @AA\tilde x_{\la\mu}A @AAx_\la\times x_\mu A \\
   U'_{s_i\la+s_i\mu} @<<< V'_{s_i\la,s_i\mu} @>>> U'_{s_i\la}\times U'_{s_i\mu}.
  \end{CD}
 \]
\end{theorem}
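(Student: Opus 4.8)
The plan is to realise the BGP reflection functor as a morphism of moduli stacks and then verify the commutativity of the diagram fibrewise, using that the conditions cutting out $U'$ and $V'$ are precisely what make the relevant kernel construction flat and exact. First I would write down $x_\la$ explicitly. Since $i$ is a sink of $Q'$, a representation $N$ of $Q'$ of dimension vector $s_i\la$ carries a canonical map
\[
 \phi'_N\colon \bigoplus_{a} N_{s(a)} \longrightarrow N_i,
\]
the sum running over the arrows $a$ of $Q'$ with target $i$, and one has $\Hom(N,S_i')\cong(\coker\phi'_N)^*$; thus $N\in U'_{s_i\la}$ exactly when $\phi'_N$ is surjective. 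Over $U'_{s_i\la}$ the universal such map is a fibrewise surjective map of vector bundles, so $\ker\phi'$ is a subbundle of rank $\la_i$ (by the standard dimension count with $s_i$). Setting $x_\la(N)_j=N_j$ for $j\neq i$, $x_\la(N)_i=\ker\phi'_N$, and letting the (now outgoing) arrows at $i$ be the composites $\ker\phi'_N\hookrightarrow\bigoplus_a N_{s(a)}\to N_{s(a)}$, defines a family of $Q$-representations, i.e.\ a morphism $x_\la\colon U'_{s_i\la}\to X_\la$. Its image lies in $U_\la$, since the arrow data presents $\Hom(S_i,x_\la(N))$ as the kernel of the injection $\ker\phi'_N\hookrightarrow\bigoplus_a N_{s(a)}$, which vanishes. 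The inverse is the analogous cokernel construction from $Q$ (where $i$ is a source) to $Q'$; that the two are mutually inverse is the theorem of \cite{bgp}, upgraded here to an equality of stack morphisms because both constructions are defined integrally over the respective open substacks. Hence $x_\la$, and a fortiori $x_\la\times x_\mu$, is an isomorphism.

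Second, for $\tilde x_{\la\mu}$ I would run the same kernel construction on a family of short exact sequences and show it remains exact. For a short exact sequence $0\to N'\to N\to N''\to 0$ representing a point of $V'_{s_i\la,s_i\mu}$, the defining conditions give $\Hom(N',S_i')=\Hom(N'',S_i')=0$, and left-exactness of the contravariant functor $\Hom(-,S_i')$ forces $\Hom(N,S_i')=0$ as well; equivalently all three maps $\phi'_{N'},\phi'_N,\phi'_{N''}$ are surjective. The kernel--cokernel sequence of the three maps $\phi'$ therefore collapses (all cokernels being zero) to a short exact sequence
\[
 0\to\ker\phi'_{N'}\to\ker\phi'_N\to\ker\phi'_{N''}\to 0,
\]
while at each vertex $j\neq i$ the construction is the identity. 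Thus the reflection carries our sequence to a short exact sequence of $Q$-representations whose sub and quotient have no $S_i$-subobject, i.e.\ to a point of $V_{\la\mu}$; performed in families this gives $\tilde x_{\la\mu}$, an isomorphism by the same cokernel argument as above.

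Finally, commutativity of both squares is functoriality of this construction together with the displayed short exact sequence. The right square commutes because the horizontal maps take the sub and quotient of a sequence, and the reflection of $0\to N'\to N\to N''\to 0$ has sub $x_\la(N')$ and quotient $x_\mu(N'')$; the left square commutes because the horizontal maps take the middle term, and the reflected sequence has middle term $x_{\la+\mu}(N)=\ker\phi'_N$. I expect the main obstacle to be precisely the exactness step: the reflection functor is only left exact in general, and it is the restriction to $V'$---where every $\phi'$ is surjective---that kills the cokernel terms and promotes left-exactness to exactness. Some care is also needed to confirm that fibrewise surjectivity of $\phi'$ is an open, hence family-stable, condition, so that all the kernels stay locally free of the expected rank and the constructed families are genuinely flat.
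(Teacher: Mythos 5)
Your proposal is correct, but it is worth noting that the paper contains no proof of this statement at all: Theorem \ref{bgg} is quoted as a packaged consequence of \cite{bgp}, with the stack-level formulation taken for granted. What you have written is essentially the content that citation hides, worked out in full: the explicit kernel construction $x_\la(N)_i=\ker\phi'_N$ at the sink $i$ of $Q'$, the identification $\Hom(N,S_i')\cong(\coker\phi'_N)^*$ showing that the open condition defining $U'_{s_i\la}$ is exactly fibrewise surjectivity of $\phi'$ (so that the kernel is a subbundle and the construction works in families, giving a morphism of stacks rather than merely a functor on representation categories), the dimension count giving dimension vector $\la$, and mutual inverses via the dual cokernel construction from $Q$. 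The two points where genuine care is required are handled correctly: first, the middle vertical arrow $\tilde x_{\la\mu}$ is only well defined because $\Hom(N',S_i')=\Hom(N'',S_i')=0$ forces $\Hom(N,S_i')=0$ by left exactness of $\Hom(-,S_i')$, so the middle term of a point of $V'_{s_i\la,s_i\mu}$ does lie in $U'_{s_i\la+s_i\mu}$; second, the reflection functor $\ker\phi'$ is only left exact in general, and it is precisely the vanishing of the cokernels over $V'$ that makes the snake-lemma sequence collapse to a short exact sequence of $Q$-representations, which is what both the definition of $\tilde x_{\la\mu}$ and the commutativity of the two squares rest on. In short, your argument is a complete and self-contained proof of what the paper outsources to \cite{bgp}; the only cosmetic caveat is that commutativity of diagrams of stacks means commutativity up to canonical $2$-isomorphism, which your functorial construction does supply.
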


Under the isomorphism $x\map{U_{s_i\la}'}{U_\la}$, the sheaves $j^*\L_{s_i\la}'$ and $j^*\L_\la$ have isomorphic direct summands (up to shifts). This is because they are semisimple and every simple perverse sheaf on $X_\la$ occurs as a direct summand of $\L_\la$. Therefore there is a Morita equivalence between $\endb(j^*\L_{s_i\la}')$ and $\endb(j^*\L_\la)$.
This Morita equivalence is Kato's reflection functor $\T_i\map{\endb(j^*\L_\la)\mods}{\endb(j^*\L_{{s_i}\la}')\mods}$.

Kato's reflection functor satisfies
\[
 \T_i(\homb(\L_\la,j_*\F))=\homb(\L_{{s_i}\la}',j_*x_\la^*\F)
\]

\begin{lemma}\label{rewrite}
 Consider the following diagram, in which the leftmost and rightmost squares are pullback squares, and the middle square is commutative
  \[  \begin{CD}
   X@<f<< S @= S @>g>> Y\\
   @AjAA   @AjAA @AAhA @AAhA \\
   U @<f<< A @>e>> B @>g>> V.
  \end{CD}
 \]
 Suppose that $f$ is smooth, $g$ is proper and $h$ is an immersion. Then we have the equality of functors from $D^b(U;k)$ to $D^b(V;k)$:
 \[
  h^*g_*f^*j_*\cong (g\circ e)_*f^*
  \]
\end{lemma}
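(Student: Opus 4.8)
The plan is to rewrite the left-hand functor by chaining together base change isomorphisms, peeling off the diagram one square at a time and moving the pullbacks inward. Reading $h^* g_* f^* j_*$ from the inside out, the first move is to commute $f^*$ past $j_*$ using the leftmost square. Since that square is a pullback and $f\map{S}{X}$ is smooth, smooth base change supplies a natural isomorphism $f^* j_* \cong j_* f^*$, where on the right-hand side $f$ now denotes the map $A\to U$ and $j$ denotes $A\to S$. This yields $h^* g_* f^* j_* \cong h^* g_* j_* f^*$.

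Next I would push the pullback $h^*$ inside past $g_*$ using the rightmost square. That square is a pullback and $g\map{S}{Y}$ is proper, so proper base change gives $h^* g_* \cong g_* h^*$, where now $g$ denotes the map $B\to V$ and $h$ denotes $B\to S$. Substituting, $h^* g_* j_* f^* \cong g_* h^* j_* f^*$.

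It remains to simplify $h^* j_*$ with $h\map{B}{S}$ and $j\map{A}{S}$, and this is where the middle square and the immersion hypothesis enter. Commutativity of the middle square says precisely that $j = h\circ e$ as maps $A\to S$, so $j_* \cong h_* e_*$ and hence $h^* j_* \cong h^* h_* e_*$. Because $h$ is an immersion, $h_*$ is fully faithful, so the counit $h^* h_* \to \id$ is an isomorphism and $h^* j_* \cong e_*$. Plugging this in gives $g_* h^* j_* f^* \cong g_* e_* f^* = (g\circ e)_* f^*$, which is the claimed identity.

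The proof is therefore a bookkeeping exercise in base change, and the only point requiring real care is allocating the hypotheses to the correct theorem and getting the variance right: smoothness of $f$ feeds smooth base change on the left square, properness of $g$ feeds proper base change on the right square, and the immersion hypothesis on $h$ is exactly what collapses $h^* h_*$ to the identity. The main obstacle I anticipate is not the diagram chase itself but confirming that these base change isomorphisms are all available in the present setting of derived categories on the moduli stacks in question; granting that, the naturality and compatibility of the composite isomorphism are routine.
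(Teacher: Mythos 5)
Your proof is correct and takes exactly the paper's approach: the paper's own proof is the one-line remark that the lemma is ``a routine consequence of base change and the identity $h^*h_*=\id$,'' and your argument is precisely the detailed unwinding of that remark---smooth base change on the left pullback square, proper base change on the right pullback square, and the counit isomorphism $h^*h_*\cong\id$ for the immersion $h$ to collapse the middle square.
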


\begin{proof}
 This is a routine consequence of base change and the identity $h^*h_*=\id$.
\end{proof}

\begin{lemma}\label{essimage}
 Suppose $\F\in D^b(U_\la\times U_\mu;k)$. Then
 $p_!q^*j_*\F$ is in the essential image of $j_*\map{D^b(U_{\la+\mu};k)}{D^b(X_{\la+\mu};k)}$.
\end{lemma}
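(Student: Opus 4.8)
The plan is to reduce to the standard recollement criterion for the essential image of an open pushforward, and then to verify that criterion by base change. Write $a\map{Z_{\la+\mu}}{X_{\la+\mu}}$ for the closed immersion complementary to $j\map{U_{\la+\mu}}{X_{\la+\mu}}$, so $Z_{\la+\mu}$ parametrises representations $M$ with $\Hom(S_i,M)\neq 0$. The recollement triangle $a_*a^!(-)\to(-)\to j_*j^*(-)\xrightarrow{+1}$ shows that $\mathcal K\in D^b(X_{\la+\mu};k)$ lies in the essential image of $j_*$ if and only if $a^!\mathcal K=0$. So it suffices to prove $a^!\,p_!\,q^*\,j_*\F=0$, where (as in the statement) $j$ also denotes the product open immersion $U_\la\times U_\mu\hookrightarrow X_\la\times X_\mu$.

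First I would rewrite $q^*j_*\F$. Because $V_{\la\mu}=q^{-1}(U_\la\times U_\mu)$, the square
\[
 \begin{CD}
  V_{\la\mu} @>\tilde j>> S_{\la\mu}\\
  @Vq_UVV @VVqV\\
  U_\la\times U_\mu @>j>> X_\la\times X_\mu
 \end{CD}
\]
is cartesian, and since $q$ is smooth, smooth base change gives $q^*j_*\F\cong \tilde j_*\,q_U^*\F$, where $\tilde j\map{V_{\la\mu}}{S_{\la\mu}}$ is the open immersion and $q_U$ is the restriction of $q$.

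Next, as $p$ is proper I would base change along $a$. Put $\widehat Z=p^{-1}(Z_{\la+\mu})$, with closed immersion $\hat a\map{\widehat Z}{S_{\la\mu}}$ and (proper) projection $\hat p\map{\widehat Z}{Z_{\la+\mu}}$; base change gives $a^!p_!\cong\hat p_!\,\hat a^!$, so that
\[
 a^!\,p_!\,q^*\,j_*\F\cong \hat p_!\,\hat a^!\,\tilde j_*\,q_U^*\F.
\]
The crux is the geometric observation that $\widehat Z$ and $V_{\la\mu}$ are disjoint: a point of $\widehat Z$ has $\Hom(S_i,M)\neq 0$, whereas at a point of $V_{\la\mu}$ one has $\Hom(S_i,M')=\Hom(S_i,M'')=0$, and applying the left-exact functor $\Hom(S_i,-)$ to (\ref{ses}) then forces $\Hom(S_i,M)=0$. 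Hence $\hat a$ factors as $c\circ\iota$ through the closed complement $c\map{C}{S_{\la\mu}}$ of $V_{\la\mu}$, and therefore
\[
 \hat a^!\,\tilde j_*=\iota^!\,c^!\,\tilde j_*=0
\]
by the recollement identity $c^!\tilde j_*=0$. Thus the right-hand side above vanishes, giving $a^!\,p_!\,q^*\,j_*\F=0$, as required.

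The formal six-functor manipulations here are routine, so I expect the only genuine content to be the disjointness $\widehat Z\cap V_{\la\mu}=\emptyset$. The point demanding care is that the square relating $p$ to its open analogue (from the diagram preceding Theorem \ref{bgg}) is merely commutative, not cartesian, since $V_{\la\mu}$ is strictly smaller than $p^{-1}(U_{\la+\mu})$; this is precisely why one cannot simply commute $\tilde j_*$ past $p_!$, and why the argument is routed through the vanishing of $a^!$.
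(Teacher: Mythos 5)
Your proof is correct and takes essentially the same route as the paper's: reduce via the recollement triangle to showing $a^!p_!q^*j_*\F=0$, apply base change for the proper map $p$, and conclude from the same key geometric fact that left-exactness of $\Hom(S_i,-)$ applied to the short exact sequence makes the relevant loci disjoint. The only cosmetic difference is that you perform an extra smooth base change so as to check the vanishing upstairs on $S_{\la\mu}$ (disjointness of $p^{-1}(Z)$ and $V_{\la\mu}$, then the recollement identity $c^!\tilde j_*=0$), whereas the paper pushes everything down to $X_\la\times X_\mu$ and phrases the same fact as disjointness of the images of $f$ and $j$ there.
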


\begin{proof}
Let $Z$ be the complement of $U_{\la+\mu}$ in $X_{\la+\mu}$ and $i\map{Z}{X_{\la+\mu}}$ be the inclusion. By considering the exact triangle $i_!i^!\to \id \to j_*j^*\xrightarrow{+1}$, it suffices to show that $i^!p_!q^*j_*=0$.
Let $S_Z=q\inv(Z)$ and $f\map{S_Z}{X_\la\times X_\mu}$ be the restriction of $p$ to $S_Z$. By base change, since $p$ is proper and $q$ is smooth, $i^!p_!q^*j_*=q_!f^!j_*[\dim q]$. Thus it suffices to show that $f^!j_*=0$, i.e. that $f$ and $j$ have disjoint image in $X_\la\times X_\mu$.

If $(M',M'')\in \im f$ then there exists a short exact sequence $0\to M'\to M\to M''\to 0$ with $\Hom(S_i,M)\neq 0$. Thus either $\Hom(S_i,M')\neq 0$ or $\Hom(S_i,M'')\neq 0$, so either way $(M',M'')\notin \im j$, completing the proof.
\end{proof}

\section{monoidality}

In this section, we use the following diagram (c.f. Lemma \ref{rewrite}):

\[
 \begin{CD}
  X_{\la}\times X_\mu @<q<<S_{\la+\mu}@= S_{\la+\mu} @>p>> X_{\la+\mu} \\
   @AjAA   @AjAA @AAhA @AAhA \\
    U_\la\times U_\mu @<q<< V_{\la+\mu} @>e>> B @>p>> U_{\la+\mu}
 \end{CD}
\]

Here $B$ is defined so that the right hand square is a pullback square. One easily checks that the left hand side is a pullback square.

\begin{theorem}
 The functor $\T_i$ is monoidal.
\end{theorem}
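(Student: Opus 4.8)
The relevant monoidal structure is the induction product: as $\nu$ ranges over $\N I$ the categories $\C_i(\nu)$ form a monoidal category under $I_A$, and likewise the target categories on the $Q'$ side. That $I_A$ restricts to these subcategories is the algebraic counterpart of Lemma \ref{essimage}. The plan is to construct a natural isomorphism $\T_i(I_A(M,N))\cong I_A(\T_i M,\T_i N)$ and then to verify the coherence axioms. By Lemma \ref{lem:jstar} every object of $\C_i(\la)$ has the form $M=\homb(\L_\la,j_*\F)$ with $\F\in\langle j^*\L_\la\rangle$; since all the functors involved are triangulated it suffices to treat such objects.

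First I would compute $I_A(M,N)$ geometrically. Since $j_*\F\boxtimes j_*\G=(j\times j)_*(\F\boxtimes\G)$ and, by the compatibility of (\ref{opencorollary}) with Corollary \ref{triequiv}, these sheaves lie in the triangulated categories generated by $\L_\la$ and $\L_\mu$, Theorem \ref{induction} gives $I_A(M,N)\cong\homb(\L_{\la+\mu},I_G(j_*\F\boxtimes j_*\G))$. By Lemma \ref{essimage} the geometric induction lies in the essential image of $j_*$, so it equals $j_*\H$ with $\H\cong j^*I_G(j_*\F\boxtimes j_*\G)$. Applying Lemma \ref{rewrite} to the monoidality diagram of this section (so that $f=q$ is smooth, $g=p$ is proper, and $h$ is the open immersion) identifies $\H$ with the open induction $I_G^U(\F,\G):=(p\circ e)_*q^*(\F\boxtimes\G)[\dim q]$ computed along the bottom row through $V_{\la\mu}$. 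Thus $I_A(M,N)\cong\homb(\L_{\la+\mu},j_*I_G^U(\F,\G))$, manifestly an object of $\C_i(\la+\mu)$.

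Next I would apply the defining formula $\T_i(\homb(\L_\la,j_*\F))=\homb(\L'_{s_i\la},j_*x_\la^*\F)$ to obtain $\T_i(I_A(M,N))\cong\homb(\L'_{s_i(\la+\mu)},j_*x_{\la+\mu}^*I_G^U(\F,\G))$. The crux is to transport the open induction across the BGP isomorphisms. Theorem \ref{bgg} supplies isomorphisms $x_{\la+\mu}$, $\tilde x_{\la\mu}$ and $x_\la\times x_\mu$ intertwining the maps $q,e,p$ for $Q$ (through $V_{\la\mu}$) with their primed counterparts for $Q'$ (through $V'_{s_i\la,s_i\mu}$). Because the vertical maps are isomorphisms the two commuting squares are trivially cartesian, so pullback commutes with pushforward and is compatible with $q^*$; this yields $x_{\la+\mu}^*I_G^U(\F,\G)\cong I_G^{U'}(x_\la^*\F,x_\mu^*\G)$, the analogous open induction for $Q'$. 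Running the computation of the previous paragraph for $Q'$ gives $I_A(\T_i M,\T_i N)\cong\homb(\L'_{s_i\la+s_i\mu},j_*I_G^{U'}(x_\la^*\F,x_\mu^*\G))$, and since $s_i(\la+\mu)=s_i\la+s_i\mu$ the two expressions agree, producing the desired isomorphism.

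The main obstacle is this transport step, and in particular ensuring that the open induction is genuinely computed along the bottom row of the monoidality diagram -- so that Theorem \ref{bgg} applies verbatim -- rather than only after pushing forward to $X_{\la+\mu}$; this is exactly what forces the passage through Lemmas \ref{essimage} and \ref{rewrite}, and one must track that the shift $[\dim q]$ matches on both sides (which it does, since $\tilde x_{\la\mu}$ preserves relative dimension). It then remains to check naturality in $(M,N)$, which follows from the naturality of each step, including the isomorphism of Theorem \ref{induction}, together with the associativity and unit coherence constraints: associativity reduces to the associativity of $I_G$ and a threefold version of the same base-change diagram, while the unit constraint is the case $\nu=0$, where $\T_i$ and both induction products act as the identity.
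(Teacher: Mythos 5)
Your proposal is correct and follows essentially the same route as the paper's proof, step for step: write $M,N$ via Lemma \ref{lem:jstar}, compute the induction product geometrically via Theorem \ref{induction}, pass to the open substack using Lemmas \ref{essimage} and \ref{rewrite}, apply the defining formula for $\T_i$, and identify the two sides through the BGP diagram of Theorem \ref{bgg}. Your extra care about coherence axioms and about verifying that $j_*\F$ lies in $\langle \L_\la \rangle$ (so Theorem \ref{induction} applies) goes slightly beyond what the paper records, but it is the same argument.
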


\begin{proof}
 We have to show that $\T_i(M\circ N)$ is naturally isomorphic to $\T_i(M)\circ \T_i(N)$. Use Lemma \ref{lem:jstar} to write $M$ and $N$ as $\homb(\L,j_*\M)$ and $\homb(\L,j_*\caln)$ respectively.
Then by Theorem \ref{induction}, 
\[
 M\circ N \cong \homb(\L_{\la+\mu},p_!q^*(j_*\M\boxtimes j_*\caln)).
\]
By Lemma \ref{essimage}, we have $p_!q^*(j_*\M\boxtimes j_*\caln)\cong h_*h^*p_!q^*(j_*\M\boxtimes j_*\caln)$. By Lemma \ref{rewrite}, we can write this as
\[
 M\circ N\cong \homb(\L_{\la+\mu}, h_*(p\circ e)_*q^*(j_*\M\boxtimes j_*\caln)).
\]
From the description of Kato's reflection functor, we have
\begin{equation}\label{tsmcn}
 \T_i(M\circ N)\cong \homb(\L',h_*x^*(p\circ e)_*q^*(j_*\M\boxtimes j_*\caln)).
\end{equation}

On the other hand, from the description of Kato's reflection functor,
\[
 \T_i(M)\circ \T_i(N)\cong \homb(\L',j_*x^*\M)\circ \homb(\L,j_*x^*\caln))
\]
Then Theorem \ref{induction} tells us that
\[
 \T_i(M)\circ \T_i(N)\cong \homb(\L',p_!q^*(j_*x^*\M\boxtimes j_*x^*\caln))
\]
Then Lemmas \ref{essimage} and \ref{rewrite} tell us that we have
\begin{equation}\label{tsmtsn}
 \T_i(M)\circ \T_i(N)\cong \homb(\L',h_*(p\circ e)_*q^*(j_*x^*\M\boxtimes j_*x^*\caln)).
\end{equation}
Tracing through all the maps, Theorem \ref{bgg} allows us to identify the right hand sides of (\ref{tsmcn}) and (\ref{tsmtsn}), completing the proof.
 \end{proof}
%
%
%
%
%

\bibliographystyle{alpha}

\begin{thebibliography}{ALELR}


\bibitem[BGP]{bgp}
Bernstein, Gel'fand and Ponomarev, \newblock Coxeter functors, and {G}abriel's theorem.
\newblock {\em Uspehi Mat. Nauk}, 28(2(170)):19--33, 1973.


\bibitem[KKOP]{kkop}
Masaki Kashiwara, Myungho Kim, Se-jin Oh and Euiyong Park, 
Monoidal categories associated with strata of flag manifolds 
\arxiv{1708.04428}

\bibitem[K1]{kato}
Syu Kato. 
Poincar\'e-{B}irkhoff-{W}itt bases and
              {K}hovanov-{L}auda-{R}ouquier algebras.
{\it Duke Math. J.} {\bf 163} (2014), no. 3, 619--663.
\arXiv{1203.5254}

\bibitem[K2]{kato2}
Syu Kato,
On the Monoidality of the Saito Reflection Functors.
\arxiv{1711.09085}


\bibitem[KL1]{kl1}
Mikhail Khovanov, Aaron D. Lauda,
A diagrammatic approach to categorification of quantum groups I.
{\it Represent. Theory} {\bf 13} (2009), 309--347.
\arXiv{0803.4121}


\bibitem[KL2]{kl2}Mikhail Khovanov, Aaron D. Lauda,
A diagrammatic approach to categorification of quantum groups II.
{\it Trans. Amer. Math. Soc.} {\bf 363} (2011), no. 5, 2685--2700.
\arXiv{0804.2080}

\bibitem[L1]{lusztig}
George.~Lusztig.
\newblock Canonical bases arising from quantized enveloping algebras.
\newblock {\em J. Amer. Math. Soc.}, 3(2):447--498, 1990.


\bibitem[L2]{lusztigbook}
George Lusztig.
\newblock {\em Introduction to quantum groups}, volume 110 of {\em Progress in
  Mathematics}.
\newblock Birkh\"auser Boston Inc., Boston, MA, 1993.

\bibitem[Mc1]{klr1}
 Peter~J. McNamara.
\newblock Finite dimensional representations of {K}hovanov-{L}auda-{R}ouquier
  algebras {I}: {F}inite type.
\newblock {\em J. Reine Angew. Math.}, 707:103--124, 2015.
 \arxiv{1207.5860}

\bibitem[Mc2]{geometric}
P. J. McNamara. 
Representation theory of geometric extension algebras.
\arxiv{1701.07949}

 \bibitem[PvdB]{pb}
 Alexander Polishchuk and Michel Van den Bergh, 
 Semiorthogonal decompositions of the categories of equivariant coherent sheaves for some reflection groups.
  \arXiv{1503.04160}

\bibitem[S]{schnurer}
  O. Schn\"urer,
  Equivariant Sheaves on Flag Varieties.
\newblock  {\em Math. Z.}, 267 (2011), no. 1-2, 27–80.
  \arxiv{0809.4785}
  
 \bibitem[W]{webster}
 B.~Webster.
 Weighted Khovanov-Lauda-Rouquier algebras
 \arxiv{1209.2463}


 \bibitem[VV]{vv}
Michela~Varagnolo and Eric~Vasserot.
\newblock Canonical bases and {KLR}-algebras.
\newblock {\em J. Reine Angew. Math.}, 659:67--100, 2011.
\arXiv{0901.3992}

%

\end{thebibliography}
\def\cprime{$'$}

\end{document}